\xpatchcmd{\itemize}{\def\makelabel}{\setlength{\itemsep}{5pt}\def\makelabel}{}{}
\xpatchcmd{\enumerate}{\def\makelabel}{\setlength{\itemsep}{5pt}\def\makelabel}{}{}
\setlist[enumerate,itemize]{topsep=4pt, parsep=3pt, itemsep=3pt, 
leftmargin=0.5cm, 
listparindent=20pt, labelsep=5pt, itemindent=0pt, partopsep=5pt}
\DeclarePairedDelimiter\abs{\lvert}{\rvert}
\def\bg#1{\bigl({#1}\bigr)}
\def\bgg#1{\biggl({#1}\biggr)}
\def\C{\mathbb{C}}
\def\R{\mathbb{R}}
\def\rmand{\quad\hbox{ and }\quad}
\numberwithin{equation}{section}
\numberwithin{figure}{section}
\DeclareMathOperator\sgn{\mathrm{sgn}}
\crefname{case}{Case}{Cases}
\crefname{case}{Case}{Cases}
\crefname{cond}{Condition}{Conditions}
\Crefname{cond}{Condition}{Conditions}
\crefname{ineq}{Ineq.}{Ineqs.}
\Crefname{ineq}{Inequality}{Inequalities}
\crefname{rec}{Recurrence}{Recurrences}
\Crefname{rec}{Recurrence}{Recurrences}
\crefname{icond}{Condition}{Conditions}
\Crefname{icond}{Condition}{Conditions}
\crefname{thm}{Theorem}{Theorems}
\Crefname{thm}{Theorem}{Theorems}
\newtheorem{thm}{Theorem}[section]
\newtheorem{cor}[thm]{Corollary}
\newtheorem{lem}[thm]{Lemma}
\theoremstyle{definition}
\newtheorem{case}{Case}[thm]
\makeatletter \@addtoreset{equation}{section} \makeatother
\title[]{Geometry of limits of zeros \\[5pt] 
of polynomial sequences of type $(1,1)$}
\author[D.G.L. Wang]{David G.L. Wang$^\dag$$^\ddag$}
\address{
$^\dag$School of Mathematics and Statistics, Beijing Institute of Technology, 102488 Beijing, P. R. China\\
$^\ddag$Beijing Key Laboratory on MCAACI, Beijing Institute of Technology, 102488 Beijing, P. R. China}
\email{david.combin@gmail.com}
\author[J.J.R. Zhang]{Jerry J.R. Zhang$^\dag$}
\address{
$^\dag$School of Mathematics and Statistics, Beijing Institute of Technology, 102488 Beijing, P. R. China}
\email{jrzhang.combin@gamil.com}
\thanks{
D.G.L. Wang is supported by the General Program of National Natural Science Foundation of China (Grant No.\ 11671037).}
\keywords{limit of zeros; real-rootedness; recurrence; root distribution}
\subjclass[2010]{03D20, 26C10, 30C15, 37F40}
\begin{document}

\begin{abstract}   
In this paper, we study the root distribution of some univariate polynomials satisfying a recurrence of order two with linear polynomial coefficients. We show that the set of non-isolated limits of zeros of the polynomials is either an arc, or a circle, or a ``lollipop'', or an interval. As an application, we discover a sufficient and necessary condition for the universal real-rootedness of the polynomials, subject to certain sign condition on the coefficients of the recurrence. Moreover, we obtain the sharp bound for all the zeros when they are real.
\end{abstract}

\maketitle

\section{Introduction}

Root distribution of polynomials in a sequence 
discover intensive information about the interrelations 
of the polynomials in the sequence, 
especially when the sequence satisfies a recurrence.
Stanley \cite{StaW}
provides some figures for the root distribution of some polynomials in a sequence 
arising from combinatorics. 

In the study of the root distribution of sequential polynomials,
both the real-rootedness and the limiting distribution of zeros of the polynomials 
receive much attention. 
Some evidence for the significance of real-rootedness of polynomials
can be found in Stanley~\cite[\S 4]{Sta00}.
Bleher and Mallison~\cite{BM06} consider
the zeros of Taylor polynomials, 
and the asymptotics of the zeros for linear combinations of exponentials.
Some study on certain ``zero attractor'' of particular sequences of polynomials
can be found in~\cite{BG07,GHR09}. 
The exploration of zero attractors of Appell polynomials has been
regarded as ``gems in experimental mathematics'' in \cite{BG08}.
Limiting distribution of zeros has been used to study the four-color theorem
via the chromatic polynomials initiated by Birkhoff \cite{Bir12},
which amounts to the nonexistence of a chromatic polynomial 
with a zero at the point $4$.
Beraha and Kahane~\cite{BK79} examine the limits of zeros 
for the sequence of chromatic polynomials of 
a special family of $3$-regular graphs, 
described as to consist of an inner and outer square separated by $n$ $4$-rings.
It turns out that the number $4$ is a limit of zeros of polynomials in this family.

Motived by the LCGD conjecture from topological graph theory,
Gross, Mansour, Tucker and the first author~\cite{GMTW16-01,GMTW16-10}
study the root distribution of polynomials satisfying the recurrence
\begin{equation}\label[rec]{rec:AB}
W_n(z) = A(z)W_{n-1}(z)+B(z)W_{n-2}(z),
\end{equation}
where the functions $A(z)$ and $B(z)$ are polynomials such that
one of them is linear and that the other is constant. 
They established the real-rootedness subject to some sign conditions 
of the coefficients of $A(z)$ and $B(z)$. 
Since the real-rootedness implies the log-concavity,
they confirm the LCGD conjecture for many graph families
whose genus polynomials satisfy \cref{rec:AB} with the sign conditions.
Orthogonal polynomials and quasi-orthogonal polynomials have closed relations with
\cref{rec:AB};
see Andrews, Richard and Ranjan~\cite{ARR99B} and Brezinski, Driver and Redivo-Zaglia~\cite{BDR04}.
Jin and Wang~\cite{JW17X} characterized the common zeros of polynomials $W_n(z)$ 
for general $A(z)$ and $B(z)$.

Following Gross et al.~\cite{GMTW16-01}, 
a sequence $\{W_n(z)\}_n$ of polynomials satisfying \cref{rec:AB} is said to be of type $(\deg A(z),\,\deg B(z))$.
It is normalized if $W_0(z)=1$ and $W_1(z)=z$. 
When $A(z)=az+b$ and $B(z)=cz+d$ are linear, \cref{rec:AB} reduces to
\begin{equation}\label[rec]{rec2:linear}
W_n(z) = (az+b)W_{n-1}(z)+(cz+d)W_{n-2}(z).
\end{equation}
Concentrating on the root distribution, and considering the polynomials defined by $(-1)^nW_n(-z)$,
one may suppose without loss of generality that $c\ge 0$.
We use a quadruple $(\sgn(a),\sgn(b),\sgn(c),\sgn(d))$,
each coordinate of which is either $+$ or~$-$ or $0$, 
to denote the combination of signs of the numbers $a,b,c,d$.

Gross et al.~\cite{GMTW16-01,GMTW16-10},
establish the real-rootedness 
for Cases $(+,*,0,-)$, $(0,+,+,+)$ and $(0,+,+,-)$,
where the symbol~$*$ indicates that the number $b$ might be of any sign.
In Case $(-,-,+,-)$,
Wang and Zhang~\cite{WZ17X--+-} establish the real-rootedness of all polynomials $W_n(z)$
for when $\Delta_g>0$, where $\Delta_g=(b+c)^2+4d(1-a)$.
In Case $(+,+,+,+)$, they \cite{WZ17X++++} show 
that every polynomial $W_n(z)$ is real-rooted if and only if $ad\le bc$.

According to Beraha, Kahane, and Weiss' result~\cite{BKW75,BKW78} on limits of zeros 
of polynomials satisfying \cref{rec:AB}, polynomials satisfying \cref{rec2:linear} have 
at most two isolated limits of zeros.
In this paper, 
we show that the set of non-isolated limits of zeros of polynomials satisfying \cref{rec2:linear}
is either an arc, or a circle, or a 	``lollipop'', or an interval.
As an application, we can show that in Case $(+,-,+,-)$, every polynomial is real-rooted 
if and only if $ad\le bc$.
Moreover, 
when the isolated limits are real, the zeros approach to them in an oscillating manner
in Cases $(0,+,+,+)$ and $(+,+,+,+)$, that is,
from both the left and right sides of the isolated limits,
while the convergence way is from only one side in Case $(+,-,+,-)$; see \cref{thm:rr:<}.

We should mention that the generating function of the normalized polynomials satisfying \cref{rec:AB} is
\[
\sum_{n\ge0}W_n(z)t^n=\frac{1+(z-A(z))t}{1-A(z)t-B(z)t^2}.
\]
In comparison,
the root distribution of the polynomials generated by the function
\[
\sum_{n\ge0}W_n(z)t^n=\frac{1}{1-A(z)t-B(z)t^2}
\]
has been investigated in \cite{Tra14}, 
in which Tran found an algebraic curve containing the zeros 
of all polynomials $W_n(z)$ with large subscript $n$.

This paper is organised as follows. 
After reviewing necessary notion and and notation,
we interpret Beraha et al.'s characterization for polynomials satisfying \cref{rec2:linear} in \cref{thm:lz}.
In \S\ref{sec:rr}, we provide a sufficient and necessary condition 
of real-rootedness in Case $(+,-,+,-)$, 
and the root distribution when they are real-rooted as an application of \cref{thm:lz}.

\section{Geometry of the limits of zeros}

Throughout this paper, 
we let $a,b,c,d\in\R$, $ac\ne0$, and let $\{W_n(z)\}_{n\ge 0}$
be a sequence of polynomials satisfying \cref{rec2:linear}.
Then the polynomial $W_n(z)$ has leading term $a^{n-1}z^n$.
For any complex number $z=re^{i\theta}$ with $\theta\in(-\pi,\pi]$,
we use the square root notation $\sqrt{z}$ to denote 
the number $\sqrt{r}e^{i\theta/2}$, which lies in the right half-plane $\theta\in(-\pi/2,\,\pi/2]$.
The general formula in \cref{lem:00}
is the base of our study, which can be found in~\cite{GMTW16-01,GMTW16-10}. 

\begin{lem}\label{lem:00}
Let $A,B\in\C$.
Suppose that $W_0=1$ and $W_n=AW_{n-1}+BW_{n-2}$ for $n\ge 2$. Then 
\[
W_n=\begin{cases}
{\displaystyle {\alpha_+\lambda_+^n+\alpha_-\lambda_-^n}},&\textrm{ if $\Delta\neq0$},\\[5pt]
{\displaystyle {A+nh\over 2}\cdot\bgg{{A\over 2}}^{n-1}},&\textrm{ if $\Delta=0$},
\end{cases}
\]
for $n\ge 0$, 
where $h=2W_1-A$ and 
\[
\lambda_\pm=\frac{A\pm\sqrt{\Delta}}{2},\qquad
\alpha_\pm=\frac{\sqrt{\Delta}\pm h}{2\sqrt{\Delta}},
\qquad\text{with $\Delta=A^2+4B$}.
\] 
\end{lem}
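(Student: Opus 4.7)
The plan is to recognize \cref{lem:00} as nothing more than the explicit solution of a linear recurrence with constant complex coefficients, and to produce it by the standard characteristic-equation method. The characteristic polynomial of the recurrence $W_n = A W_{n-1} + B W_{n-2}$ is $x^2 - A x - B$, whose discriminant is exactly $\Delta = A^2 + 4B$ and whose roots are $\lambda_\pm = (A \pm \sqrt{\Delta})/2$. The argument naturally splits into the two cases prescribed by whether $\Delta$ vanishes.

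In the non-degenerate case $\Delta \neq 0$, the roots $\lambda_\pm$ are distinct, so every solution takes the form $W_n = \alpha_+ \lambda_+^n + \alpha_- \lambda_-^n$ for constants $\alpha_\pm \in \C$ to be determined. I would impose the initial conditions $\alpha_+ + \alpha_- = W_0 = 1$ and $\alpha_+ \lambda_+ + \alpha_- \lambda_- = W_1$, then solve the resulting $2 \times 2$ linear system. Using $\lambda_+ - \lambda_- = \sqrt{\Delta}$ together with the definition $h = 2 W_1 - A$, one computes
\[
\alpha_+ = \frac{W_1 - \lambda_-}{\sqrt{\Delta}} = \frac{\sqrt{\Delta} + h}{2\sqrt{\Delta}}, \qquad
\alpha_- = 1 - \alpha_+ = \frac{\sqrt{\Delta} - h}{2\sqrt{\Delta}},
\]
which matches the stated formulas.

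In the degenerate case $\Delta = 0$, the characteristic polynomial has the repeated root $A/2$, and the general solution of the recurrence becomes $W_n = (c_1 + c_2 n)(A/2)^n$. Substituting $n = 0$ gives $c_1 = 1$, and the condition on $W_1$ forces $c_2 = (2 W_1 - A)/A = h/A$ (provided $A \neq 0$, which is automatic when $\Delta = 0$ unless also $B = 0$). Factoring out one power of $A/2$ then rearranges the result into the stated form $W_n = \tfrac{A + n h}{2}\,(A/2)^{n-1}$.

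The calculations are entirely routine, so I expect no substantive obstacle; the only point that requires care is the algebraic packaging of $W_1 - \lambda_\pm$ into the symmetric quantity $h$, which is precisely the purpose of the auxiliary definition $h = 2 W_1 - A$. A minor corner case worth a sentence is $\Delta = 0$ with $A = 0$: then $B = 0$ as well, the recurrence forces $W_n = 0$ for $n \geq 2$, and the closed form reproduces this under the convention $0^0 = 1$.
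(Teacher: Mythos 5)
Your proof is correct and is exactly the standard characteristic-root derivation; the paper does not prove this lemma at all but simply cites \cite{GMTW16-01,GMTW16-10}, where the same computation appears, so there is nothing to compare beyond noting that your verification of $\alpha_\pm=(\sqrt{\Delta}\pm h)/(2\sqrt{\Delta})$ and of the repeated-root case checks out. One small quibble: in the doubly degenerate corner $A=B=0$ the displayed formula for $W_0$ involves $(A/2)^{-1}$, so your convention $0^0=1$ covers $n=1$ but not $n=0$; this corner is harmless for the paper's purposes, where the lemma is applied pointwise to $A(z)=az+b$, $B(z)=cz+d$ with $ac\ne0$.
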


Accordingly, we employ the notations
\begin{align*}
\Delta(z)
&=A(z)^2+4B(z)
=a^2z^2+(2ab+4c)z+(b^2+4d),\\[4pt]
h(z)&=2W_1(z)-A(z)=(2-a)z-b,\\[4pt]
\lambda_\pm(z)&=\frac{A(z)\pm\sqrt{\Delta(z)}}{2},\\
\alpha_\pm(z)&=\frac{\sqrt{\Delta(z)}\pm h(z)}{2\sqrt{\Delta(z)}},\\
g(z)&=-\alpha_+(z)\alpha_-(z)\Delta(z)
=\frac{h^2(z)-\Delta(z)}{4}
=(1-a)z^2-(b+c)z-d.
\end{align*}
Denote by $x_A=-b/a$ and $x_B=-d/c$ the zeros of $A(z)$ and~$B(z)$ respectively. 
The function $\Delta(z)$ has two zeros
\[
x_\Delta^{\pm}=x_A+\frac{-2c\pm2\sqrt{\Delta_\Delta}}{a^2},
\]
where $\Delta_\Delta=c^2-a^2B(x_A)$ is the discriminant of $\Delta(z)$.
A number $z^*\in\C$ is a {\em limit of zeros} of the sequence $\{W_n(z)\}_n$ of polynomials
if there is a zero $z_n$ of $W_n(z)$ for each $n$ such that $\lim_{n\to\infty}z_n=z^*$. 

\begin{lem}[Beraha et al.~\cite{BKW75}]\label{lem:BKW}
Under the non-degeneracy conditions 
\begin{enumerate}[label=\emph{(N-\roman*)}]
\item\label[icond]{cond:rec2}
the sequence $\{W_n(z)\}_n$ does not satisfy a recurrence of order less than two,
\item\label[icond]{cond:f<>0}
$\lambda_+(z)\ne\omega\lambda_-(z)$ for some $z\in\C$ and some constant $\omega$ such that $|\omega|=1$,
\end{enumerate}
a number $z$ is a limit of zeros if and only if it satisfies one of the following conditions:
\begin{enumerate}[label=\emph{(C-\roman*)}]
\item\label[icond]{cond:-}
$\alpha_-(z)=0$ and $\lambda_+(z)<\lambda_-(z)$;
\item\label[icond]{cond:+}
$\alpha_+(z)=0$ and $\lambda_+(z)>\lambda_-(z)$;
\item\label[icond]{cond:=}
$\lambda_+(z)=\lambda_-(z)$.
\end{enumerate}
\end{lem}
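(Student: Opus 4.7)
The plan is to exploit the closed-form expression given in \cref{lem:00}, which (when $\Delta(z)\ne 0$) reads
\[
W_n(z) = \alpha_+(z)\lambda_+(z)^n + \alpha_-(z)\lambda_-(z)^n
\]
on any simply connected neighbourhood in which $\sqrt{\Delta(z)}$ admits an analytic branch. This reduces the search for limits of zeros to understanding when the two exponential terms can cancel as $n\to\infty$. Condition~\cref{cond:rec2} prevents either of $\alpha_\pm$ from vanishing identically (which would reduce the order of the recurrence), while \cref{cond:f<>0} excludes the trivial situation in which $\lambda_+/\lambda_-$ is a constant unimodular number and the two terms cancel on a full component.

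For the necessity direction, I would suppose that $z_n\to z^*$ with $W_n(z_n)=0$. If $|\lambda_+(z^*)|>|\lambda_-(z^*)|$, then dividing the vanishing relation by $\alpha_-(z_n)\lambda_+(z_n)^n$ yields
\[
\frac{\alpha_+(z_n)}{\alpha_-(z_n)} = -\left(\frac{\lambda_-(z_n)}{\lambda_+(z_n)}\right)^{\!n} \longrightarrow 0,
\]
so $\alpha_+(z^*)=0$, which is \cref{cond:+}. The symmetric case gives \cref{cond:-}, while the remaining case $|\lambda_+(z^*)|=|\lambda_-(z^*)|$ is \cref{cond:=}.

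For the sufficiency direction, I would handle \cref{cond:+,cond:-} via Hurwitz's theorem: on a small disk about $z^*$ the rescaled function $W_n(z)/\lambda_+(z)^n$ (in \cref{cond:+}) converges locally uniformly to $\alpha_+(z)$, since the subdominant term decays geometrically; the simple zero of $\alpha_+$ at $z^*$ is therefore inherited by $W_n$ for every sufficiently large $n$. In \cref{cond:=} I would parametrise the real-analytic curve $\{|\lambda_+(z)|=|\lambda_-(z)|\}$ through $z^*$, factor $W_n$ as $\lambda_-^n\bigl[\alpha_+(\lambda_+/\lambda_-)^n+\alpha_-\bigr]$, and exploit the fact that the argument of $(\lambda_+/\lambda_-)^n$ winds $\Theta(n)$ times along a small arc of the curve, so that a winding-number/Rouch\'e argument produces zeros dense along the curve, one of which must converge to $z^*$.

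The main obstacle will be the careful choice of analytic branches of $\sqrt{\Delta(z)}$ near points where $\Delta(z^*)=0$ (the two characteristic roots coalesce there, and the alternative closed form $W_n=\tfrac12(A+nh)(A/2)^{n-1}$ of \cref{lem:00} must be substituted), together with the verification in \cref{cond:=} that the winding-number balance actually forces a zero converging to $z^*$ itself rather than merely accumulating elsewhere on the curve. The non-degeneracy hypotheses \cref{cond:rec2,cond:f<>0} enter precisely at this stage, since without them the phase factor $(\lambda_+/\lambda_-)^n$ may fail to produce genuinely new zeros at each step.
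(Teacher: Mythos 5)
This lemma is not proved in the paper at all: it is imported verbatim from Beraha--Kahane--Weiss \cite{BKW75}, so there is no internal proof to compare against, and the only meaningful benchmark is the original BKW argument. Your sketch does follow that argument's standard strategy: necessity by comparing the moduli of the two characteristic roots, sufficiency for \cref{cond:-,cond:+} by locally uniform convergence of $W_n/\lambda_\pm^n$ plus Hurwitz, and sufficiency for \cref{cond:=} by an argument-principle/winding argument driven by the rotation of $(\lambda_+/\lambda_-)^n$. The necessity half is essentially complete, with one blemish: you divide by $\alpha_-(z_n)$, which may vanish or be small; it is cleaner to divide only by $\lambda_+(z_n)^n$, bound $\abs{\alpha_-(z_n)}\,\abs{\lambda_-(z_n)/\lambda_+(z_n)}^n\le Cr^n$ with $r<1$ near $z^*$, and conclude $\alpha_+(z_n)\to 0$ directly. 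In the Hurwitz step you do not need the zero of $\alpha_+$ at $z^*$ to be simple, only that $\alpha_+\not\equiv 0$, which is exactly what \cref{cond:rec2} guarantees.

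The genuine gap is in the sufficiency of \cref{cond:=}, which is the hard half of the theorem and which you only gesture at. ``Zeros dense along the curve, one of which must converge to $z^*$'' has the wrong quantifier structure: by the paper's definition you must exhibit, for every $\varepsilon>0$, a zero of $W_n$ inside the disk $D(z^*,\varepsilon)$ for all sufficiently large $n$, and density of the union $\bigcup_n R_n$ along the curve does not give that. The correct argument fixes the disk $D(z^*,\varepsilon)$ first, notes that along the arc of $\{\abs{\lambda_+}=\abs{\lambda_-}\}$ inside it the argument of $(\lambda_+/\lambda_-)^n$ sweeps an interval of length growing linearly in $n$ (this is where \cref{cond:f<>0} is used, to guarantee $\lambda_+/\lambda_-$ is non-constant so its argument genuinely varies), and then applies the argument principle to $\alpha_+(\lambda_+/\lambda_-)^n+\alpha_-$ on that fixed disk to produce a zero of $W_n$ there for each large $n$. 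You would also need to treat separately the points with $\Delta(z^*)=0$, where $\alpha_\pm$ are singular and the second branch of \cref{lem:00} applies; you flag this but do not resolve it. As a reconstruction of the BKW proof your outline is on the right track, but as written it would not constitute a complete proof of the lemma.
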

A limit $z$ of zeros is said to be {\em non-isolated} if it satisfies \cref{cond:=},
and to be {\em isolated} if it satisfies \cref{cond:-} or \cref{cond:+}.
We denote the set of non-isolated limits of zeros of the polynomials $W_n(z)$ by $\clubsuit$,
and denote the set of isolated limits of zeros by $\spadesuit$.
The clover symbol $\clubsuit$ is adopted
for the leaflets of a clover are not alone, while the spade symbol $\spadesuit$ 
appearing as a single leaflet represents isolation in comparison.

\begin{thm}\label{thm:lz}
Let $a,b,c,d\in\R$ and $ac\ne0$.
Let $\{W_n(z)\}_n$ be a sequence of polynomials satisfying \cref{rec2:linear}
with $W_0(z)=1$ and $W_1(z)=z$.
Then the sets of isolated and non-isolated limits of zeros of $\{W_n(z)\}_n$
are respectively
\begin{align*}
\spadesuit&=\{z\in\C\colon g(z)=0,\,\Re\bg{A(z)\overline{h(z)}}<0\}\rmand\\
\clubsuit&=\begin{cases}
\wideparen{x_\Delta^-x_Ax_\Delta^+},&\text{if $\Delta_\Delta<0$};\\[5pt]
C_0,&\text{if $\Delta_\Delta=0$};\\[5pt]
J_\Delta\cup C_0,&\text{if $\Delta_\Delta>0$ and $B(x_A)>0$};\\[5pt]
J_\Delta,&\text{if $\Delta_\Delta>0$ and $B(x_A)\le0$};
\end{cases}
\end{align*}
where $\overline{z}$ denotes the complex conjugate of $z$,
$\wideparen{x_\Delta^-x_Ax_\Delta^+}$ 
stands for the circular arc connecting the points $x_\Delta^-$ and $x_\Delta^+$, 
through the point $x_A$,
\[
C_0=\{z\in\C\colon\abs{z-x_B}=\abs{x_A-x_B}\}
\]
is the circle with center $x_B$ and radius $\abs{x_A-x_B}$, and
\[
J_\Delta=\{x\in\R\colon x_\Delta^-\le x\le x_\Delta^+\}
\]
is an interval.
\end{thm}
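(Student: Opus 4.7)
The plan is to invoke \cref{lem:BKW} and convert each of its three conditions into concrete algebraic or geometric statements about $z$. For the isolated set $\spadesuit$, both \cref{cond:-,cond:+} require $\alpha_+\alpha_-=0$; from $\alpha_\pm=(\sqrt{\Delta}\pm h)/(2\sqrt{\Delta})$ this is equivalent to $h(z)^2=\Delta(z)$, i.e.\ $g(z)=0$. A short computation gives $|\lambda_+|^2-|\lambda_-|^2=\Re(A\overline{\sqrt{\Delta}})$; substituting $\sqrt{\Delta}=h$ under \cref{cond:-} turns $|\lambda_+|<|\lambda_-|$ into $\Re(A\bar h)<0$, and substituting $\sqrt{\Delta}=-h$ under \cref{cond:+} turns $|\lambda_+|>|\lambda_-|$ into the same inequality. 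This yields the stated description of $\spadesuit$.

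For the non-isolated set I first reformulate $|\lambda_+|=|\lambda_-|$ as
\[
A(z)^2/B(z)\in[-4,0].
\]
Indeed, writing $\lambda_+/\lambda_-=e^{i\phi}$ and using $\lambda_++\lambda_-=A$, $\lambda_+\lambda_-=-B$ gives $A^2/B=-(1+e^{i\phi})^2/e^{i\phi}=-4\cos^2(\phi/2)$, and the converse is similar. It remains to describe the preimage of $[-4,0]$ under the rational map $f(z):=A(z)^2/B(z)$. Setting $\zeta:=z-x_A$ and $L:=x_A-x_B$, $f$ becomes $a^2\zeta^2/[c(\zeta+L)]$, and writing $\zeta=u+iv$ one finds $\Im f=0$ iff $v\,\bg{(u+L)^2+v^2-L^2}=0$. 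Hence $f^{-1}(\R)=\R\cup C_0$, the second factor collapsing when $L=0$. Parametrising $C_0$ by $z=x_B+Le^{i\theta}$ yields
\[
f(z)=-4(a^2L/c)\sin^2(\theta/2),
\]
and the identity $a^2L/c=1-\Delta_\Delta/c^2$, obtained from $L=-(bc-ad)/(ac)$ and $\Delta_\Delta=c^2+a(bc-ad)$, controls how this range sits inside $[-4,0]$.

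The case analysis now follows. On $\R$, $f(x)\in[-4,0]$ reduces to $\Delta(x)\leq0$ together with ($B(x)<0$ or $A(x)=0$); since $B(x_\Delta^\pm)=-A(x_\Delta^\pm)^2/4\leq0$ and $B$ is linear, $B\leq0$ on $J_\Delta$, so $\clubsuit\cap\R$ equals $J_\Delta$ when $\Delta_\Delta\geq0$ and equals $\{x_A\}$ when $\Delta_\Delta<0$. On $C_0$, the quantity $a^2L/c$ is $>1$, $=1$, or $<1$ according as $\Delta_\Delta$ is $<0$, $=0$, or $>0$, and when $\Delta_\Delta>0$ its sign matches that of $B(x_A)=-(bc-ad)/a$. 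Comparing the range of $f|_{C_0}$ against $[-4,0]$ then gives: when $\Delta_\Delta<0$, the arc of $C_0$ on which $f\in[-4,0]$, whose endpoints (preimages of $-4$) are $x_\Delta^\pm$ and which passes through the preimage $x_A$ of $0$; when $\Delta_\Delta=0$, all of $C_0$, with both $x_A$ and $x_\Delta$ already lying on $C_0$; when $\Delta_\Delta>0$ with $B(x_A)>0$, all of $C_0$ in addition to $J_\Delta$; and when $\Delta_\Delta>0$ with $B(x_A)\leq0$, only $J_\Delta$. I expect the main technical check to be the $\Delta_\Delta<0$ case, where one must verify that the arc endpoints are genuinely $x_\Delta^\pm$, i.e.\ $|x_\Delta^\pm-x_B|=|L|$; this boils down to the identity $-\Delta_\Delta=a^2Lc-c^2$.
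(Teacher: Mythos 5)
Your argument is correct and, at its core, rests on the same reduction as the paper's: both proofs apply \cref{lem:BKW}, identify $\spadesuit$ via $\alpha_+\alpha_-=0\iff g=0$ together with $\Re\bg{A\overline{h}}<0$, and both ultimately reduce \cref{cond:=} to the condition $A^2(z)/B(z)\in[-4,0]$ (in the paper this appears as ``$A^2(z)$ and $B(z)$ have opposite directions and $\abs{A^2(z)}<\abs{4B(z)}$''). Where you genuinely diverge is in how that condition is turned into geometry. The paper separates real and imaginary parts in Cartesian coordinates and obtains $\clubsuit=C_0\cap S_0$ for an explicit vertical strip $S_0$, then analyzes how the strip cuts the circle case by case. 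You instead compute $f^{-1}(\R)=\R\cup C_0$ for $f=A^2/B$ and then parametrize $C_0$ to get $f=-4(a^2L/c)\sin^2(\theta/2)$, so that the whole case analysis is governed by the single quantity $a^2L/c=1-\Delta_\Delta/c^2$. This is arguably cleaner: the arc endpoints in the case $\Delta_\Delta<0$ need no separate verification, since they are by definition the preimages of $-4$ under $f$, i.e.\ the roots of $\Delta$, which must lie on $C_0$ because they are non-real and $f$ is real there. (Your proposed check via $\abs{x_\Delta^\pm-x_B}=\abs{L}$ is thus redundant, though the identity $-\Delta_\Delta=a^2Lc-c^2$ is correct.)

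Two small points to tidy up. First, \cref{lem:BKW} cannot be invoked until \cref{cond:rec2,cond:f<>0} are verified; the paper does this in one line (a shorter recurrence would force $W_n(z)=z^n$, contradicting the form of $W_2$; and $\abs{\lambda_+(x)}\ne\abs{\lambda_-(x)}$ for large real $x$), and you should include it. Second, your statement that on $\R$ the condition reduces to ``$\Delta(x)\le0$ together with ($B(x)<0$ or $A(x)=0$)'' misses the point $x_A$ when $B(x_A)>0$, where $\lambda_\pm=\pm\sqrt{B(x_A)}$ have equal moduli although $\Delta(x_A)>0$; this point is harmless because it lies on $C_0$ and is picked up there, but the real-axis bookkeeping should acknowledge it (as should the degenerate points where $B(z)=0$ and $f$ is undefined).
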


\begin{proof}
\Cref{cond:rec2} is satisfied since otherwise one would have 
$W_n(z)=z^n$ for each $n$,
contradicting the fact $W_2(z)=az^2+(b+c)z+d$.
\Cref{cond:f<>0} holds true since $|\lambda_-(x)|\ne|\lambda_+(x)|$ for sufficiently large real number $x$.

Suppose that $z\in\spadesuit$. From definition, we have 
$\alpha_-(z)\alpha_+(z)=0$, which implies 
\[
0=g(z)=\frac{h^2(z)-\Delta(z)}{4}.
\]
Thus $\sqrt{\Delta(z)}\in\{\pm h(z)\}$.
If $\sqrt{\Delta(z)}=h(z)$, then $\alpha_-(z)=0$ from definition.
By \cref{lem:BKW}, we have $\lambda_+(z)<\lambda_-(z)$, i.e., $\Re\bg{A(z)\overline{h(z)}}<0$.
Along the same line we can handle the other case $\sqrt{\Delta(z)}=-h(z)$.

It is clear that $\{x_A,\,x_\Delta^-,\,x_\Delta^+\}\subseteq\clubsuit$.
Let $z=x+yi\in\clubsuit$ such that $A(z)\Delta(z)\ne0$, where $x,y\in\R$.
If $y=0$, then $z,\,A(z),\,\Delta(z)\in\R$. In this case, we can infer that
\[
\lambda_-(z)=\lambda_+(z)
\quad\iff\quad
\Delta(z)<0
\quad\iff\quad
\Delta_\Delta>0 \;\text{and}\;  x\in (x_\Delta^-,\,x_\Delta^+).
\]
Otherwise $y\ne0$.  
We can infer that 
\begin{align*}
\lambda_-(z)=\lambda_+(z)
&\iff
\text{the vectors $A(z)$ and $\sqrt{\Delta(z)}$ are orthogonal}\\
&\iff
\text{the vectors $A^2(z)$ and $\Delta(z)$ have opposite directions}\\
&\iff
\text{$A^2(z)$ and $B(z)$ have opposite directions, $\abs{A^2(z)}<\abs{4B(z)}$}\\
&\iff
\begin{cases}
\Re A^2(z)\cdot \Im B(z)=\Re B(z)\cdot \Im A^2(z)\\[5pt]
\Im A^2(z)\cdot \Im B(z)<0\\[5pt]
\abs{\Im A^2(z)}<4\abs{\Im B(z)}
\end{cases}\\
&\iff
\begin{cases}
\,(x-x_B)^2+y^2=(x_A-x_B)^2\\
(x-x_A)(x-x_A+2c/a^2)<0
\end{cases}\\
&\iff
z\in C_0\cap S_0\setminus \{x_A,\,x_\Delta^-,\,x_\Delta^+\},
\end{align*}
where 
$S_0=\{z\in\C\colon \abs{\Re z-x_A}\le\abs{2c/a^2},\,c\cdotp(\Re z-x_A)\le 0\}$
is the vertical strip with boundaries $\Re z=x_A$ and $\Re z=x_A-2c/a^2$.
It is clear that the boundary $\Re z=x_A$ intersects the circle $C_0$ at the point $x_A$.
To figure out the intersection of the other boundary with $C_0$,
we proceed according to the sign of $\Delta_\Delta$.

Suppose that $\Delta_\Delta<0$. Then $J_\Delta=\emptyset$ from definition, and
\[
\Re \bg{x_\Delta^\pm}=x_A-\frac{2c}{a^2}
\rmand
\Im \bg{x_\Delta^\pm}=\pm\frac{2\sqrt{-\Delta_\Delta}}{a^2}.
\]
It follows that
\[
\bg{x_\Delta^\pm-x_B}^2
=\bgg{x_A-\frac{2c}{a^2}-x_B}^2+\biggl(\frac{2\sqrt{-\Delta_\Delta}}{a^2}\biggr)^2
=(x_A-x_B)^2.
\]
Thus the points $x_\Delta^\pm$ lie on the intersection of the boundary $\Re z=x_A-2c/a^2$ and 
the circle $C_0$. Since the intersection contains at most two points, 
the points $x_\Delta^\pm$ consitute the intersection.
Hence the set $\clubsuit=C_0\cap S_0$ is the circular arc $\wideparen{x_\Delta^-x_Ax_\Delta^+}$.

When $\Delta_\Delta=0$, the points $x_\Delta^\pm=x_A-2c/a^2$ coincide with each other.
As a consequence,  we have $C_0\cap S_0=C_0$ and $\clubsuit=J_\Delta\cup C_0=C_0$.

Below we can suppose that $\Delta_\Delta>0$.
Note that 
\begin{equation}\label{B:xA}
B(x_A)=c(x_A-x_B).
\end{equation}
When $B(x_A)\le 0$, we claim that $C_0\cap S_0=\{x_A\}$.
Let $z\in C_0\cap S_0$.
If $c>0$, then $x_A\le x_B$ by \cref{B:xA}.
Since~$z\in C_0$, we have $\Re z\ge x_A$.
Since $z\in S_0$, we have $c(\Re z-x_A)\le 0$. 
Therefore, we infer that $\Re z=x_A$, and $z=x_A$ consequently.
Otherwise $c<0$. Then $x_A\ge x_B$ by \cref{B:xA}.
In this case, $z\in C_0$ implies $\Re z\le x_A$,
and $z\in S_0$ implies $\Re z\ge x_A$. Hence $z=x_A$ for the same reason. 
This proves the claim.
Since $\Delta(x_A)=4B(x_A)\le0$, we have $x_A\in J_\Delta$. 
Hence $\clubsuit=J_\Delta$.

When $B(x_A)>0$, we claim that $C_0\subset S_0$.
Let $z\in C_0$. One may show $c(\Re z-x_A)\le 0$
in the same fashion as when $B(x_A)<0$.
By geometric interpretation and the condition $\Delta_\Delta>0$, we deduce that
\[
|\Re z-x_A|\le (\text{the diameter of $C_0$})
=2|x_A-x_B|<|2c/a^2|.
\]
This proves the claim and hence $\clubsuit=J_\Delta\cup C_0$.
\end{proof}

We remark that $z\in\spadesuit$ if and only if $\overline{z}\in\spadesuit$.
Since $\Delta_\Delta\le 0$ implies $B(x_A)>0$,
the case ``$\Delta>0$ and $B(x_A)\le0$'' in \cref{thm:lz} 
can be reduced to ``$B(x_A)\le0$''.

\begin{cor}\label{cor:rr}
Let $a,b,c,d\in\R$ and $ac\ne0$.
Let $\{W_n(z)\}_n$ be a sequence of polynomials satisfying \cref{rec2:linear}
with $W_0(z)=1$ and $W_1(z)=z$.
If every polynomial $W_n(z)$ for large $n$ is real-rooted,
then $B(x_A)\le0$, and $\Delta\ge0$ as a consequence.
\end{cor}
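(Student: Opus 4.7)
The plan is to derive both conclusions by reading Theorem~\ref{thm:lz} contrapositively. If $W_n(z)$ is real-rooted for every sufficiently large $n$, then along any sequence $(z_n)$ with $z_n$ a zero of $W_n(z)$ one has $z_n\in\R$ for all large $n$, so any limit of zeros is real; in particular $\clubsuit\subseteq\R$.

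Next I will verify that in each of the first three cases of Theorem~\ref{thm:lz} the set $\clubsuit$ contains a non-real point, so that real-rootedness forces the fourth case to hold. When $\Delta_\Delta<0$, the endpoints $x_\Delta^\pm$ of the arc $\wideparen{x_\Delta^-x_Ax_\Delta^+}\subseteq\clubsuit$ are non-real complex conjugates. When $\Delta_\Delta=0$, or $\Delta_\Delta>0$ with $B(x_A)>0$, one has $C_0\subseteq\clubsuit$; combining the identity $B(x_A)=c(x_A-x_B)$ with $c\ne 0$ and the hypothesis $B(x_A)>0$ (which, as the remark following Theorem~\ref{thm:lz} records, is forced by $\Delta_\Delta\le 0$ because then $B(x_A)\ge c^2/a^2$) gives $x_A\ne x_B$, so the circle $C_0$ has strictly positive radius and contains the non-real point $x_B+i\abs{x_A-x_B}$.

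Only the fourth case of Theorem~\ref{thm:lz} survives, which gives $B(x_A)\le 0$. Plugging this into $\Delta_\Delta=c^2-a^2B(x_A)$ and using $c\ne 0$ yields $\Delta_\Delta\ge c^2>0$, as claimed. No real obstacle arises: the whole argument is a case-by-case inspection of the classification of $\clubsuit$, with the only mild point being the unified treatment of the three discarded cases via the inequality $B(x_A)\ge c^2/a^2$ that already holds whenever $\Delta_\Delta\le 0$.
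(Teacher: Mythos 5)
Your proposal is correct and follows essentially the same route as the paper: deduce $\clubsuit\subseteq\R$ from real-rootedness, rule out the cases of Theorem~\ref{thm:lz} in which $\clubsuit$ contains non-real points, and conclude $B(x_A)\le0$ and hence $\Delta_\Delta\ge c^2>0$. Your treatment is in fact slightly more explicit than the paper's, which only considers the degenerate-circle possibility and dismisses it by the same computation $0=\Delta_\Delta=c^2-a^2B(x_A)=c^2$.
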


\begin{proof}
Since every polynomial $W_n(z)$ for large $n$ is real-rooted,
we have $\spadesuit\cup\clubsuit\subset\R$.
By \cref{thm:lz}, we find either $\clubsuit=J_\Delta$, 
or $\clubsuit=C_0$ and $C_0$ degenerates to a single point.
In the former case, we find $B(x_A)\le0$.
In the latter case, we have $\Delta_\Delta=0$ and $x_A=x_B$,
which is impossible since otherwise 
\[
0=\Delta_\Delta=c^2-a^2B(x_A)=c^2,
\]
a contradiction.  This completes the proof.
\end{proof}

When $\clubsuit=J_\Delta\cup C_0$,
it turns out that the set $\clubsuit$ looks like a lollipop;
see \cref{fig:lollipop}.
\begin{figure}[h]
\includegraphics[width=7cm]{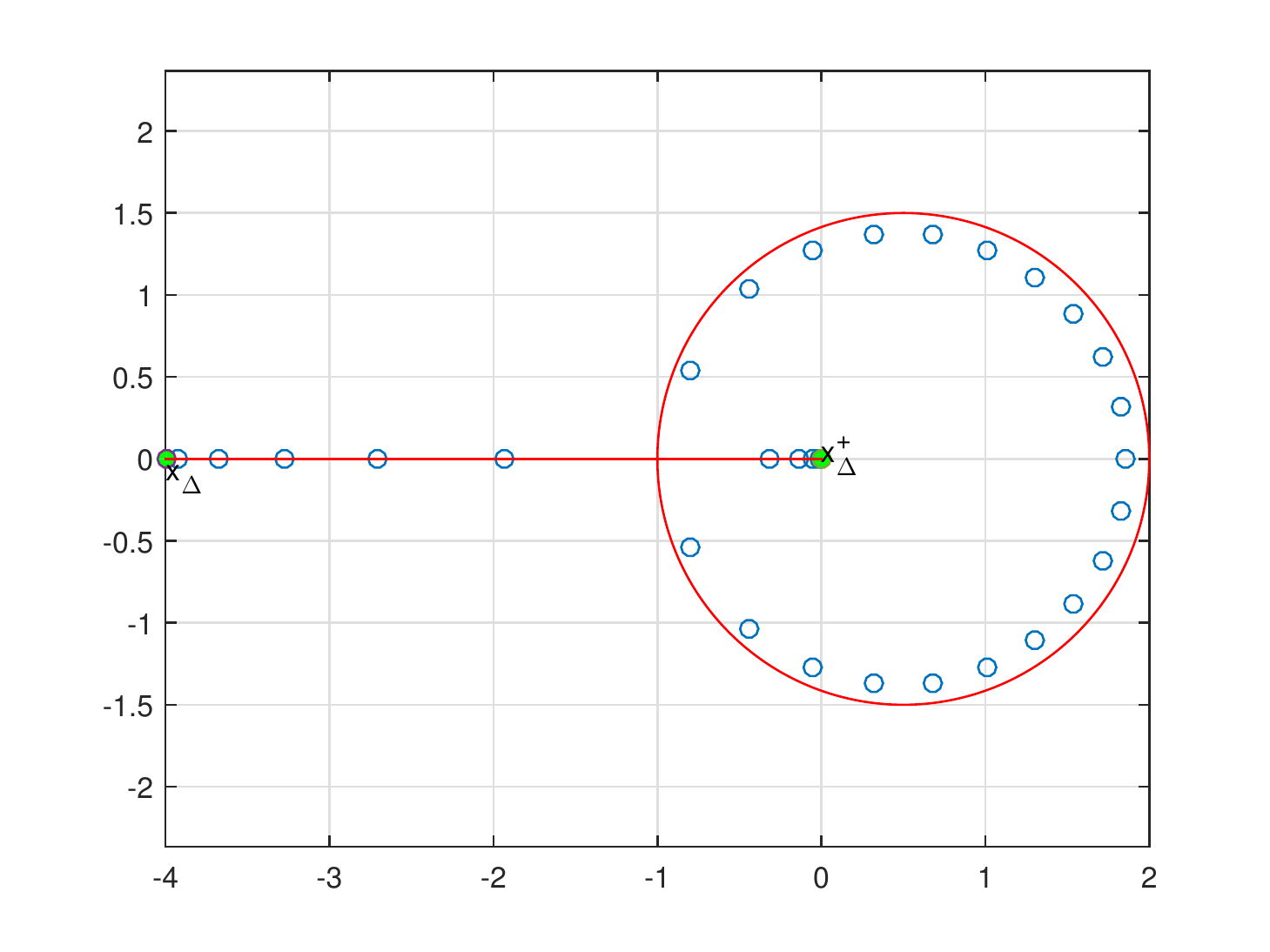}
\includegraphics[width=7cm]{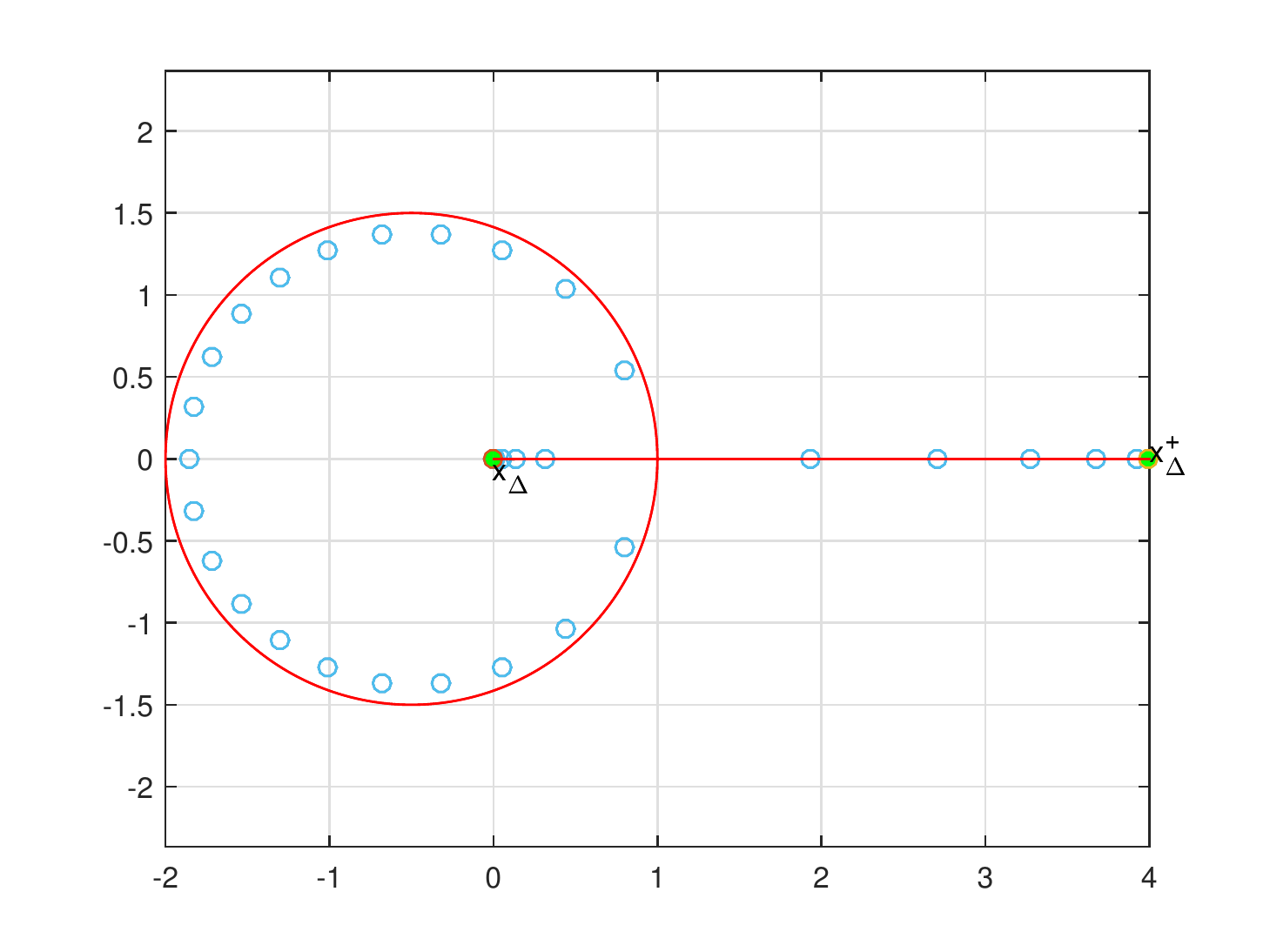}
\caption{The zero distribution of $W_{30}(z)$ for the parameters 
$(a,b,c,d)=(1,\,-2,\,2,\,-1)$ and $(a,b,c,d)=(1,\,2,\,-2,\,-1)$,
for each of which we have
$x_A=-2$, $x_B=-1/2$, and $B(x_A)=3$.}\label{fig:lollipop}
\end{figure} 

\begin{thm}\label{thm:dd>0:BxA>0}
Suppose $\Delta_\Delta>0$ and $B(x_A)>0$.†
Then $J_\Delta\cap C_0=\{2x_B-x_A\}$,
and the part of $J_\Delta$ outside the circle $C_0$ 
is longer than the part of $J_\Delta$ inside $C_0$.
\end{thm}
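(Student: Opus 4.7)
The plan is to rewrite $J_\Delta = [m - \rho,\, m + \rho]$ in terms of its midpoint $m = x_A - 2c/a^2$ and half-length $\rho = 2\sqrt{\Delta_\Delta}/a^2 > 0$, and to set $p := 2x_B - x_A$. Since $C_0$ has real center $x_B$ and positive radius $\abs{x_A - x_B}$ (positive because $B(x_A) > 0$ together with the relation $B(x_A) = c(x_A - x_B)$ forces $x_A \neq x_B$), its intersection with $\R$ consists of the two distinct diametrically opposite points $x_A$ and $p$. Because $\Delta(x_A) = 4B(x_A) > 0$, the point $x_A$ lies outside $J_\Delta$. So establishing $J_\Delta \cap C_0 = \{p\}$ reduces to showing that $p$ is strictly interior to $J_\Delta$.

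The crucial computation is to evaluate, using $x_B - x_A = -B(x_A)/c$,
\[
p - m = \frac{2\Delta_\Delta}{ca^2}, \qquad x_A - m = \frac{2c}{a^2}.
\]
Two consequences will drive the proof. First, $\abs{p - m} < \rho$ reduces algebraically to $\sqrt{\Delta_\Delta} < \abs{c}$, equivalently $a^2 B(x_A) > 0$, which holds by hypothesis; hence $p$ is strictly interior to $J_\Delta$, completing the first claim. Second, $(p - m)(x_A - m) = 4\Delta_\Delta / a^4 > 0$, so $p$ and $x_A$ lie on the same side of $m$.

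For the length comparison, observe that the intersection of the open disk bounded by $C_0$ with $\R$ is precisely the open segment between $p$ and $x_A$; hence the portion of $J_\Delta$ lying inside $C_0$ is the subinterval of $J_\Delta$ bounded by $p$ on one side and by the $x_A$-side endpoint of $J_\Delta$ on the other. Since $p$ and $x_A$ lie on the same side of the midpoint $m$, this subinterval is the \emph{shorter} of the two pieces into which $p$ splits $J_\Delta$: its length is $\rho - \abs{p - m}$, whereas the complementary outside piece has length $\rho + \abs{p - m}$, strictly larger as $\abs{p - m} > 0$. The main obstacle I anticipate is keeping track of which endpoint $x_\Delta^\pm$ plays the role of the ``$x_A$-side endpoint'' under the two sign patterns of $c$; the identity $(p - m)(x_A - m) > 0$ absorbs this case distinction so the argument runs uniformly in $\sgn c$.
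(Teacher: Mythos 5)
Your proposal is correct and follows essentially the same route as the paper: the pivotal identity $p-m=2\Delta_\Delta/(ca^2)$ is exactly the paper's equation for $x_0-(x_\Delta^-+x_\Delta^+)/2$, and the conclusion is drawn from the position of $p=2x_B-x_A$ relative to the midpoint of $J_\Delta$. The only (harmless) differences are cosmetic: you certify $p\in\operatorname{int}J_\Delta$ via $\abs{p-m}<\rho$ rather than via $\Delta(2x_B-x_A)<0$, and you use the sign of $(p-m)(x_A-m)$ to bypass the paper's case split on $\sgn c$.
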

\begin{proof}
By \cref{thm:lz}, we have $\clubsuit=J_\Delta\cup C_0$.
First of all, denote $x_0=2x_B-x_A$ to be one of the two real points on $C_0$,
other than~$x_A$. Since 
\[
\Delta(x_0)=-\frac{4B(x_A)\Delta_\Delta}{c^2}<0,
\]
we have $x_0\in J_\Delta$.
Second, the centre of the circle $C_0$ is not on the interval $J_\Delta$ since 
$\Delta(x_B)=A^2(x_B)>0$.
It follows that $J_\Delta\cap C_0=\{x_0\}$.
Thirdly, note that
\begin{equation}\label{pf1}
x_0-\frac{x_\Delta^-+x_\Delta^+}{2}
=\frac{1}{c}\cdot \frac{2\Delta_\Delta}{a^2}.
\end{equation}
If $c>0$, then $x_B<x_A$ by \cref{B:xA}. 
It follows that $x_0<x_B$. Thus the interval $J_\Delta$ intersects
the circle $C_0$ from the left of $C_0$.
By \cref{pf1}, we have $x_0>(x_\Delta^-+x_\Delta^+)/2$.
Thus the part of $J_\Delta$ outside the circle $C_0$ 
is longer than the part of $J_\Delta$ inside.
The other case $c<0$ can be handled in the same way.
\end{proof}

\section{The interlacing zeros for Case $(+,-,+,-)$}\label{sec:rr}
Here is the main result of this section.

\begin{thm}\label{thm:rr}
Let $a,c>0$ and $b,d<0$.
Let $\{W_n(z)\}_n$ be a sequence of polynomials satisfying \cref{rec2:linear}
with $W_0(z)=1$ and $W_1(z)=z$.
Then $W_n(z)$ is real-rooted if and only if $x_A\le x_B$.
\end{thm}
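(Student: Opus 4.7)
The proof splits into the two implications. For the ``only if'' direction, I would apply \cref{cor:rr} directly: if every $W_n(z)$ is real-rooted then, in particular, $W_n(z)$ is real-rooted for all sufficiently large $n$, whence $B(x_A) \le 0$. Combining this with the identity $B(x_A) = c(x_A - x_B)$ from~\eqref{B:xA} and using $c > 0$, one obtains $x_A \le x_B$.

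The ``if'' direction is the substantive part. Assuming first the strict inequality $x_A < x_B$, I would prove by induction on $n$ the stronger statement that $W_n(z)$ has $n$ distinct real zeros, all lying in $(-\infty, x_B)$, and that these zeros strictly interlace those of $W_{n-1}(z)$. The base cases $n = 1, 2$ are handled by direct computation: for $n=2$ the discriminant $(b+c)^2 - 4ad$ is positive (since $-4ad > 0$), the product of the zeros equals $d/a < 0$, and $W_2(x_B) = A(x_B)\,x_B > 0$ under $x_A < x_B$, which places both zeros of $W_2$ strictly to the left of $x_B$.

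For the inductive step, evaluating the recurrence at a zero $x_i = x_i^{(n-1)}$ of $W_{n-1}$ gives $W_n(x_i) = B(x_i) W_{n-2}(x_i)$. Here $B(x_i) < 0$ because $x_i < x_B$, while the interlacing hypothesis forces $\sgn W_{n-2}(x_i) = (-1)^{n-1-i}$. Consequently $W_n(x_1), \ldots, W_n(x_{n-1})$ alternate in sign and, when compared with $W_n(-\infty)$, yield $n-1$ real zeros---one in each of the intervals $(-\infty, x_1)$ and $(x_i, x_{i+1})$ for $1 \le i \le n-2$. To locate the $n$-th zero and keep it below $x_B$, I would evaluate at $x_B$: the identity $W_n(x_B) = A(x_B) W_{n-1}(x_B)$, combined with $A(x_B) = (bc - ad)/c > 0$ and $W_{n-1}(x_B) > 0$ (since $x_B$ exceeds all zeros of $W_{n-1}$ and the leading coefficient is positive), yields $W_n(x_B) > 0$. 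Together with $W_n(x_{n-1}) < 0$, this forces the final zero into $(x_{n-1}, x_B)$ and closes the induction.

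The main obstacle is the boundary case $x_A = x_B$, where $A(x_B) = 0$ and the sign argument above collapses because $W_n(x_B)$ vanishes identically. In this case the recurrence factors as $W_n(z) = B(z)\bigl[(a/c) W_{n-1}(z) + W_{n-2}(z)\bigr]$, making $x_B$ a common zero of every $W_n$ for $n \ge 2$. To conclude real-rootedness I would either extract the factor $B(z)$ and analyze the quotient sequence via an auxiliary induction, or more cleanly perturb the parameter $b$ slightly to reduce to the strict case $x_A < x_B$ and conclude by continuity of the zeros of a polynomial in its coefficients, using that real zeros can only collide in the limit and cannot spontaneously become non-real.
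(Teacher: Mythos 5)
Your proposal is correct, and it reaches the sufficiency by a genuinely lighter route than the paper. The necessity direction is identical: \cref{cor:rr} plus $B(x_A)=c(x_A-x_B)$ and $c>0$. For sufficiency in the strict case $x_A<x_B$, the paper works inside the sharp interval $(u,v)$ defined case-by-case in \eqref{interval:uv}, which forces the four-case sign analysis of \cref{lem:uv} before \cref{lem:crt:itl} can be applied; you instead use the one-sided bound $(-\infty,x_B)$, replacing the left endpoint evaluation by the sign of $W_n$ at $-\infty$ and the right endpoint by $W_n(x_B)=A(x_B)W_{n-1}(x_B)>0$ (valid since $B(x_B)=0$ and $A(x_B)>0$ exactly when $x_A<x_B$). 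Your sign bookkeeping $W_n(x_i)=B(x_i)W_{n-2}(x_i)$ with $B(x_i)<0$ and $\sgn W_{n-2}(x_i)=(-1)^{n-1-i}$ produces the required $n$ sign changes, so the induction closes. For the boundary case $x_A=x_B$, the paper proves the stronger structural statement of \cref{thm:rr:=} by passing to $U_n(z)=W_n(z)/A^{\lfloor n/2\rfloor}(z)$ and \cref{rec:U}; your first suggestion (divide out $B$ once) would not suffice as stated, because the multiplicity of the common zero $x_B$ grows like $\lfloor n/2\rfloor$, but your second suggestion --- perturb $b$ to restore $x_A<x_B$ and invoke closedness of the set of real-rooted degree-$n$ polynomials under coefficient limits with nonvanishing leading coefficient $a^{n-1}$ --- is a complete and standard argument. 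The trade-off is that the paper's heavier machinery also delivers the sharp bound $(u,v)$ and the location of the limits of zeros (\cref{thm:rr:<,thm:rr:=}), which your argument does not recover, but for the real-rootedness equivalence asserted in \cref{thm:rr} your proof is sound and more economical.
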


The necessity part of \cref{thm:rr} can be seen directly from \cref{cor:rr}.
The sufficiency part will be handled for 
the case $x_A<x_B$ in \cref{thm:rr:<}, 
and for the case $x_A=x_B$ in \cref{thm:rr:=}. 
Throughout this section, we suppose that $x_A\le x_B$,
which implies that $\Delta_\Delta>0$ and $x_\Delta^\pm\in\R$. 
The zeros of the function $g(z)$ are
\[
x_g^\pm=\begin{cases}
\displaystyle
\frac{b+c}{2(1-a)}\pm\frac{\sqrt{\Delta_g}}{2\abs{1-a}},&\text{if $a\ne1$},\\[8pt]
\displaystyle -{d\over b+c},&\text{if $a=1$ and $b+c\ne0$},
\end{cases}
\]
where $\Delta_g=(b+c)^2+4d(1-a)$. 
We define two numbers $u$ and $v$ by
\begin{equation}\label{interval:uv}
(u,v)=\begin{cases}
(x_\Delta^-,\,x_\Delta^+),&\text{if $a<2$ and $F\le0$};\\[4pt]
(x_g^-,\,x_g^+),&\text{if $a>2$ and $F<0$};\\[4pt]
(x_g^+,\,x_\Delta^+),&\text{if $a<1$ and $F>0$};\\[4pt]
(x_g^-,\,x_\Delta^+),&\text{otherwise};
\end{cases}
\end{equation}
where $F=\Delta_g-\Delta_\Delta=d(a-2)^2+bc(2-a)+b^2$.
Note that $(u,v)=(x_\Delta^-,\,x_\Delta^+)$ if $a=1$ and $b+c=0$.
Furthermore, we have $u,v\in\R$ since $\Delta_g>\Delta_\Delta>0$ whenever $a\ge 2$ or $F>0$.
As will be seen in \cref{thm:rr:<,thm:rr:=}, we have $u<v$ and the interval $(u,v)$
is the best bound for the zeros of $W_n(z)$.

\subsection{Case $x_A<x_B$}
We determine the signs of $W_n(u)$ and $W_n(v)$ in \cref{lem:uv}.

\begin{lem}\label{lem:uv}
Let $a,c>0$ and $b,d<0$.
Let $\{W_n(z)\}_n$ be a sequence of polynomials satisfying \cref{rec2:linear}
with $W_0(z)=1$ and $W_1(z)=z$.
Suppose that $x_A< x_B$.
Then we have 
\begin{align}
&u\le x_\Delta^-<x_A<x_\Delta^+\le v<x_B,\label[ineq]{uv}\\[4pt]
&u<0<v,\label[ineq]{u0v}\\[4pt]
&W_n(u)(-1)^n>0,\label[ineq]{W:u}\\[4pt]
&W_n(v)>0,\label[ineq]{W:v}\rmand\\[4pt]
&\{u,v\}\subseteq\spadesuit\cup\clubsuit.\label{uv:club}
\end{align}
\end{lem}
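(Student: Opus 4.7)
The plan is to treat the five assertions as a package resting on the geometry of the critical points $x_A,\,x_B,\,x_\Delta^\pm,\,x_g^\pm$ together with the closed form from \cref{lem:00}. I split the argument into three parts: (i) the geometric orderings \cref{uv} and \cref{u0v}; (ii) the sign statements \cref{W:u} and \cref{W:v}; (iii) the membership claim \cref{uv:club}.

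For part (i), the strict middle inequalities $x_\Delta^- < x_A < x_\Delta^+$ are immediate from
\[
\Delta(x_A)\;=\;4B(x_A)\;=\;4c(x_A - x_B)\;<\;0,
\]
using the hypothesis $x_A < x_B$. The inequality $x_\Delta^+ < x_B$ then comes from $\Delta(x_B) = A(x_B)^2 > 0$ together with the fact that the axis of the parabola $\Delta$ sits at $x_A - c/a^2 < x_A < x_B$, placing $x_B$ strictly to the right of $J_\Delta$. For the outer bounds $u \le x_\Delta^-$ and $x_\Delta^+ \le v$, the clean identity
\[
g(x_\Delta^\pm)\;=\;\tfrac14 h(x_\Delta^\pm)^2\;\ge\;0,
\]
coming from $g = (h^2 - \Delta)/4$ and $\Delta(x_\Delta^\pm) = 0$, combined with the sign of the leading coefficient $1-a$ of $g$, places $x_\Delta^\pm$ inside $[x_g^-,\,x_g^+]$ when $a > 1$ and outside when $a < 1$. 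The sign of $F = \Delta_g - \Delta_\Delta$ then decides whether each $x_g^\pm$ is interior to $J_\Delta$, which is exactly the four-way split encoded in \cref{interval:uv}. Inequality \cref{u0v} then reduces to $v \ge x_\Delta^+ > x_A > 0$ and to an analogous location argument for $0$ using $g(0) = -d > 0$ and $\Delta(0) = b^2 + 4d$.

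For part (ii), I apply \cref{lem:00}. When $v = x_\Delta^+$, we have $\Delta(v) = 0$, and the lemma gives
\[
W_n(v)\;=\;\tfrac12\bg{A(v) + n\,h(v)}\bg{A(v)/2}^{n-1}.
\]
Since $v > x_A$ forces $A(v) > 0$, positivity of $W_n(v)$ reduces to $h(v) \ge 0$. A direct expansion of $h(x_\Delta^+)$ in terms of $\sqrt{\Delta_\Delta}$ shows that $h(x_\Delta^+) \ge 0$ holds automatically when $a \le 2$ (because $-b > 0$ and $v > 0$), and when $a > 2$ it holds precisely when $F \ge 0$. This is exactly the pattern behind the definition of $v$ in \cref{interval:uv}: the switch to $v = x_g^+$ is forced only in case (ii), where $a > 2$ and $F < 0$. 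In that case $g(v) = 0$ with $\Delta(v) \ne 0$, so the identity $g = -\alpha_+\alpha_-\Delta$ forces one of $\alpha_\pm(v)$ to vanish, collapsing $W_n(v)$ to a single geometric term whose positivity is read off directly. The alternating claim $(-1)^n W_n(u) > 0$ mirrors this argument: at $u = x_\Delta^-$ the factor $\bg{A(u)/2}^{n-1}$ alternates because $A(u) < 0$ (as $u < x_A$), and at $u = x_g^\pm$ the surviving branch carries a negative $\lambda$, producing the same alternation.

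For part (iii), whenever $u$ or $v$ coincides with one of $x_\Delta^\pm$ it belongs to $J_\Delta \subseteq \clubsuit$ via \cref{thm:lz}, where the circle component is ruled out by $B(x_A) < 0$. When $u$ or $v$ equals one of $x_g^\pm$, membership in $\spadesuit$ reduces on the real line to the inequality $A(z) h(z) < 0$ (the real analogue of $\Re\bg{A(z)\overline{h(z)}} < 0$), which is verified by substituting the explicit roots of $g$. The main obstacle will be keeping the case analysis synchronized across parts: aligning the four alternatives of \cref{interval:uv} with the sign of $F$, the sign of $h(x_\Delta^+)$, and the correct surviving branch in \cref{lem:00}. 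The identity $g(x_\Delta^\pm) = h(x_\Delta^\pm)^2/4$ is the clean algebraic thread that ties the ordering and sign arguments together.
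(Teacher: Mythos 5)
Your plan follows essentially the same route as the paper: the same four-way case split driven by \cref{interval:uv}, the key identity $g(x_\Delta^\pm)=h^2(x_\Delta^\pm)/4$ for the orderings, the two branches of \cref{lem:00} (the degenerate formula for $W_n(x_\Delta^\pm)$ and the collapse $W_n(x_g^\pm)=(x_g^\pm)^n$) for the sign claims, and the reduction of membership in $\spadesuit$ to $A(x)h(x)<0$ on the real line. The details you defer --- notably $u<0$ in the cases $u=x_\Delta^-$ and $u=x_g^+$, and $v<x_B$ when $v=x_g^+$, which the paper settles via $h(x_\Delta^-)h(x_\Delta^+)=4F/a^2$, Vi\`eta's relation $x_g^-x_g^+=d/(a-1)$, and $g(x_B)<0$ rather than via the signs of $\Delta(0)$ and $g(0)$ alone --- are fillable along the lines you indicate.
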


\begin{proof}
The premise $x_A<x_B$ implies $\Delta(x_A)=4B(x_A)<0$.
It follows that
\begin{align}
&x_A\in (x_\Delta^-,\,x_\Delta^+),\qquad 
x_\Delta^+>0,\qquad
A(x_\Delta^+)>0>A(x_\Delta^-),\rmand\notag\\
&h(x_\Delta^+)=(2-a)x_\Delta^+-b\ge -b>0\qquad\text{if $a\le 2$}.\label[ineq]{h:xd2:+}
\end{align}
Since $\Delta(x_B)=A^2(x_B)>0$ and $x_\Delta^-<x_A<x_B$, we have $x_\Delta^+<x_B$.

To confirm Relation \eqref{uv:club}, by \cref{thm:lz},
it suffices to show that 
\begin{equation}\label[ineq]{dsr:uv:club}
A(x)h(x)<0,\qquad\text{for any $x\in\{u,v\}\backslash\{x_\Delta^-,\,x_\Delta^+\}$}.
\end{equation}
Let $x_h$ be the unique zero of the function $h(z)$ when $a\ne2$. Then $x_h=b/(2-a)$.
We proceed according to the definition of the numbers $u$ and $v$.

\begin{case}\label{case:a<2:F<=0}
$a<2$, $F\le0$ and $[u,v]=J_\Delta$. 
It is routine to compute that 
\begin{equation}\label{hh:xd}
h(x_\Delta^-)h(x_\Delta^+)=\frac{4F}{a^2}.
\end{equation}
Together with \cref{h:xd2:+}, we have $h(x_\Delta^-)\le 0$ and thus
\[
x_\Delta^-\le x_h=\frac{b}{2-a}<0,
\]
verifying \cref{u0v}. By \cref{lem:00}, we have
\begin{equation}\label{W:xd}
W_n(x_\Delta^\pm)
=\frac{A(x_\Delta^\pm)+nh(x_\Delta^\pm)}{2}\cdot\bgg{\frac{A(x_\Delta^\pm)}{2}}^{n-1},
\end{equation}
which implies \cref{W:u,W:v}.
\end{case}

\begin{case}\label{case:a>2:F<0}
$a>2$, $F<0$ and $[u,v]=[x_g^-,\,x_g^+]$.
Observe that 
\begin{equation}\label[ineq]{g:xd}
g(x_\Delta^\pm)=\frac{h^2(x_\Delta^\pm)}{4}\ge 0.
\end{equation}
Since the polynomial $g(z)$ is quadratic with leading coefficient negative,
we can derive all inequalities in \eqref{uv} except $v<x_B$.
Since $F<0$, we have $d(a-2)-bc<0$ and thus
\[
g(x_B)=\frac{-d}{c^2}\bg{(a-1)d-bc}<\frac{-d}{c^2}\bg{(a-2)d-bc}<0.
\]
Since $x_g^-<x_A<x_B$, we infer that $x_g^+<x_B$.

On the other hand,
by Vi\`eta's theorem,
we have 
\begin{equation}\label[ineq]{xg1xg2}
x_g^-x_g^+=\frac{d}{a-1},
\end{equation}
whose negativity verifies \cref{u0v}. 
By \cref{lem:00}, we have 
\begin{equation}\label{W:xg}
W_n(x_g^\pm)=(x_g^\pm)^n,
\end{equation}
which implies \cref{W:u,W:v}.
It is routine to compute that 
\begin{equation}\label{hh:xg}
h(x_g^-)h(x_g^+)=\frac{F}{a-1}\qquad\text{if $a\ne1$}.
\end{equation}
Thus $h(v)<0<h(u)$.
By \eqref{uv}, we have $A(u)<0<A(v)$.
This proves \cref{dsr:uv:club}.
\end{case}

\begin{case}\label{case:a<1:F>0}
$a<1$, $F>0$ and $[u,v]=[x_g^+,\,x_\Delta^+]$.
In view of \cref{W:xd,W:xg,h:xd2:+}, to confirm \cref{uv,u0v,W:u,W:v,dsr:uv:club},
we shall show that
\[
x_g^+\le x_\Delta^-,\qquad
x_g^+<0,\rmand
h(x_g^+)>0.
\]
In fact, we note that the polynomial $g(z)$ is quadratic with leading coefficient positive.
On the one hand, \cref{hh:xg} gives $x_h\in (x_g^-,\,x_g^+)$.
This confirms $h(x_g^+)>0$ immediately. 
By \cref{hh:xd}, we can deduce that $x_h<x_\Delta^-$, 
since otherwise one would have the absurd inequality
\[
0<x_\Delta^+<x_h=\frac{b}{2-a}<0.
\]
 Thus \cref{g:xd} implies $(x_g^-,\,x_g^+)\cap J_\Delta=\emptyset$.
Moreover,
the whole interval $(x_g^-,\,x_g^+)$ lies to the left of $J_\Delta$. 
This proves $x_g^+\le x_\Delta^-$.
On the other hand, by \cref{xg1xg2} we have $x_g^-x_g^+>0$.
Since $x_g^-<x_h<0$, 
we find $x_g^+<0$.
\end{case}

\begin{case}\label{case:remain}
For all remaining cases we have $[u,v]=[x_g^-,\,x_\Delta^+]$.
This time, to confirm \cref{uv,u0v,W:u,W:v,dsr:uv:club}, we shall show that
\[
x_g^-\le x_\Delta^-,\qquad
x_g^-<0,\qquad
h(x_\Delta^+)\ge0,\rmand
h(x_g^-)>0.
\]
In fact, when $a=1$,
in view of \cref{case:a<2:F<=0}, we now have $F>0$ and thus $b+c<0$. 
Note that $g(z)=-(b+c)z-d$.
It follows from \cref{g:xd} that $x_g^-\le x_\Delta^-$.
Since $g(0)=-d>0$, we obtain $x_g^-<0$.
By \cref{h:xd2:+}, we have $h(x_\Delta^+)\ge0$. It is routine to compute that
\[
h(x_g^-)=x_g^--b=-\frac{d}{b+c}-b=-\frac{F}{b+c}>0.
\]
Now, in view of \cref{case:a<2:F<=0,case:a<1:F>0}, we have $a>1$.
Consequently, one may derive $J_\Delta\subseteq[x_g^-,\,x_g^+]$ and $x_g^-<0$ 
as in \cref{case:a>2:F<0}. 
We shall handle the two inequalities involving $h$ according to the value range of $a$.
If $a=2$, then the function $h(z)=-b$ reduces to a positive constant and we are done.
Now we can suppose that $a\ne 2$. 
\begin{enumerate}[leftmargin=20pt]
\item
If $a>2$, then
\[
h(x_\Delta^-)+h(x_\Delta^+)
=\frac{4}{a^2}\bg{(a-2)c-ab}>0.
\]
In view of \cref{case:a>2:F<0}, we have $F\ge 0$.
By \cref{hh:xd}, we have $h(x_\Delta^-)h(x_\Delta^+)\ge 0$.
Therefore, we infer that $h(x_\Delta^+)\ge 0$. 
Since the polynomial $h(z)$ is strictly decreasing and $x_g^-<x_\Delta^+$, we have $h(x_g^-)>h(x_\Delta^+)>0$.
\item
If $1<a<2$, by \cref{h:xd2:+}, it suffices to show that $h(x_g^-)>0$.
In view of \cref{case:a<2:F<=0}, we have $F>0$.
By \cref{h:xd2:+,hh:xd}, we have $h(x_\Delta^-)>0$ and $x_h<x_\Delta^-$.
By \cref{hh:xg}, we have $h(x_g^-)h(x_g^+)>0$. 
Since $J_\Delta\subseteq[x_g^-,\,x_g^+]$, we deduce that $x_h<x_g^-$, i.e., $h(x_g^-)>0$.
\end{enumerate}
\end{case}
This completes the proof.
\end{proof}

Let $X,Y\subset\R$ such that $|X|-|Y|\in\{0,1\}$.
We say that {\em $X$ interlaces $Y$}, 
if the elements $x_i$ of $X$ and the elements $y_j$ of $Y$ can be arranged so that 
$x_1\le y_1\le x_2\le y_2\le\cdots$,
and that {\em $X$ strictly interlaces $Y$} 
if no equality holds in the ordering.
\Cref{lem:crt:itl} is Lemma 3.3 of \cite{GMTW16-10}, 
wherein used in a proof of the real-rootedness 
of polynomials $W_n(z)$ defined by \cref{rec2:linear}
with $a>0$, $b\in\R$, $c=0$ and $d<0$ by induction.

\begin{lem}[Gross et al.~\cite{GMTW16-10}]\label{lem:crt:itl}
Let $\{W_n(z)\}_n$ be a sequence of polynomials satisfying \cref{rec:AB}.
Let $m\ge0$ and $\alpha,\beta\in\mathbb{R}$.
Suppose that the polynomial $W_{m+2}(x)$ has degree $m+2$, and that
$B(x)<0$ for all $x\in R_{m+1}$,
\hbox{$W_{m} (\alpha)W_{m+2}(\alpha)>0$}, 
$W_{m} (\beta)W_{m+2}(\beta)>0$,
$|R_{m+1}|=m+1$,
$R_{m+1}\subset(\alpha,\beta)$, and
$R_{m+1}$ strictly interlaces~$R_{m}$.
Then we have 
$|R_{m+2}|=m+2$, $R_{m+2}\subset(\alpha,\beta)$, and
$R_{m+2}$ strictly interlaces $R_{m+1}$.
\end{lem}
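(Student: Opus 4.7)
The plan is to exploit the recurrence \cref{rec:AB} at the zeros of $W_{m+1}$, together with the endpoint sign conditions, to locate $m+2$ sign changes of $W_{m+2}$ inside $(\alpha,\beta)$, and then to match this count against the degree bound $\deg W_{m+2}=m+2$.

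First I would order $R_{m+1}=\{r_1<\cdots<r_{m+1}\}$; strict interlacing with $R_m$ (combined with the degree bound coming from the leading term of $W_m$) forces $|R_m|=m$ and an arrangement
\[
r_1<s_1<r_2<s_2<\cdots<s_m<r_{m+1}.
\]
Evaluating the recurrence at a root $r_i$ gives $W_{m+2}(r_i)=B(r_i)W_m(r_i)$, and since $B(r_i)<0$ by hypothesis, $W_{m+2}(r_i)$ and $W_m(r_i)$ carry opposite signs. Because $W_m$ changes sign exactly once on each $(r_i,r_{i+1})$ (at $s_i$) and is nonzero on $R_{m+1}$ (again by strict interlacing), the values $W_m(r_1),\ldots,W_m(r_{m+1})$ alternate strictly in sign, and therefore so do $W_{m+2}(r_1),\ldots,W_{m+2}(r_{m+1})$. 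The intermediate value theorem then produces a zero of $W_{m+2}$ in each of the $m$ intervals $(r_i,r_{i+1})$.

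The two remaining zeros come from the endpoint hypotheses. Since the smallest element of $R_m$ is $s_1>r_1>\alpha$, the polynomial $W_m$ does not vanish on $(\alpha,r_1]$, so $\sgn W_m(\alpha)=\sgn W_m(r_1)$. Combined with $W_m(\alpha)W_{m+2}(\alpha)>0$, this gives $\sgn W_{m+2}(\alpha)=\sgn W_m(r_1)=-\sgn W_{m+2}(r_1)$, so a zero of $W_{m+2}$ lies in $(\alpha,r_1)$; the identical argument at $\beta$ produces one in $(r_{m+1},\beta)$. We have thus exhibited $m+2$ distinct zeros of $W_{m+2}$ inside $(\alpha,\beta)$, and since $\deg W_{m+2}=m+2$ these must be \emph{all} its zeros, yielding both $|R_{m+2}|=m+2$ and $R_{m+2}\subset(\alpha,\beta)$.

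It remains to verify that the interlacing of $R_{m+2}$ and $R_{m+1}$ is strict: were some $r_j\in R_{m+1}$ also a zero of $W_{m+2}$, the recurrence would force $0=B(r_j)W_m(r_j)$, and since $B(r_j)\ne0$ this would place $r_j\in R_m$, contradicting that $R_{m+1}$ strictly interlaces $R_m$. I expect no serious obstacle in executing this plan; the only genuinely delicate ingredient is the endpoint sign bookkeeping, which is precisely what the hypotheses $W_m(\alpha)W_{m+2}(\alpha)>0$ and $W_m(\beta)W_{m+2}(\beta)>0$ are tailored to supply.
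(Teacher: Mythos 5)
The paper does not prove this lemma at all---it is imported verbatim as Lemma~3.3 of Gross et al.~\cite{GMTW16-10}---so there is no in-paper proof to compare against; your argument is the standard one and is correct. The only point worth making explicit is that the sign alternation of $W_m(r_1),\dots,W_m(r_{m+1})$ requires each $s_i$ to be a sign change of $W_m$, i.e.\ a zero of odd multiplicity; this follows from $\deg W_m\le m$ together with $|R_m|=m$ (all zeros real and simple), which you correctly gesture at with the parenthetical about the leading term of $W_m$ and which holds in every application made in this paper, where $W_n$ has leading term $a^{n-1}z^n$.
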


Now we are in a position to show the real-rootedness with the interlacing property 
and the best bound of all zeros.

\begin{thm}\label{thm:rr:<}
Let $a,c>0$ and $b,d<0$ such that $x_A<x_B$.
Let $\{W_n(z)\}_n$ be a sequence of polynomials satisfying \cref{rec2:linear}
with $W_0(z)=1$ and $W_1(z)=z$.
Then every polynomial $W_n(z)$ is real-rooted. 
Denote by $R_n$ the zero set of~$W_n(z)$.
Then $R_n\subset (u,v)$, and the set $R_{n+1}$ strictly interlaces $R_n$.
Moreover, the bound $(u,v)$ is sharp, in the sense that both the numbers $u$ and $v$ are limits of zeros.
\end{thm}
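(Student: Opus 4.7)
The plan is to establish the real-rootedness and the interlacing property by induction on $n$, applying \cref{lem:crt:itl} with $\alpha=u$ and $\beta=v$ to propagate the properties from $R_{m+1}$ to $R_{m+2}$. The sharpness of the resulting bound will then follow directly from \cref{uv:club} in \cref{lem:uv}.

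As the base case, $R_0=\emptyset$ and $R_1=\{0\}$; the inclusion $R_1\subset(u,v)$ holds by \cref{u0v}, and $R_1$ interlaces $R_0$ strictly and trivially. For the inductive step, supposing that $|R_{m+1}|=m+1$, $R_{m+1}\subset(u,v)$, and $R_{m+1}$ strictly interlaces $R_m$, I would verify the remaining hypotheses of \cref{lem:crt:itl}. The polynomial $W_{m+2}(x)$ has degree $m+2$ since its leading coefficient $a^{m+1}$ is nonzero. For any $x\in R_{m+1}\subset(u,v)$, the inequality $B(x)=cx+d<0$ holds because $x<v<x_B$ by \cref{uv}. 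The endpoint sign conditions follow from \cref{W:u,W:v}: $W_n(u)$ has sign $(-1)^n$ so that $W_m(u)W_{m+2}(u)>0$, and $W_n(v)>0$ so that $W_m(v)W_{m+2}(v)>0$. Then \cref{lem:crt:itl} yields $|R_{m+2}|=m+2$, $R_{m+2}\subset(u,v)$, and that $R_{m+2}$ strictly interlaces $R_{m+1}$, closing the induction.

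Sharpness of the bound $(u,v)$ is then immediate from \cref{uv:club}: since $\{u,v\}\subseteq\spadesuit\cup\clubsuit$, each of $u$ and $v$ is a limit of zeros by \cref{lem:BKW}, whose non-degeneracy conditions were already verified in the proof of \cref{thm:lz}. So some sequence of zeros of $W_n(z)$ converges to $u$ and another to $v$. The real difficulty of the theorem has been absorbed into \cref{lem:uv}: the piecewise definition \cref{interval:uv} is calibrated precisely so that $u<0<v<x_B$, so that the endpoint signs $W_n(u)(-1)^n>0$ and $W_n(v)>0$ hold uniformly in $n$, and so that both endpoints belong to $\spadesuit\cup\clubsuit$. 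Given that preparation, the present theorem is a clean assembly through \cref{lem:crt:itl}.
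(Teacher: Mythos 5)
Your proposal is correct and follows essentially the same route as the paper: induction via \cref{lem:crt:itl} with $(\alpha,\beta)=(u,v)$, with all hypotheses supplied by \cref{lem:uv}, and sharpness read off from $\{u,v\}\subseteq\spadesuit\cup\clubsuit$. The only cosmetic difference is that you invoke \cref{uv:club} directly for sharpness, whereas the paper splits it into $\{x_\Delta^\pm\}\subseteq\clubsuit$ (from \cref{thm:lz}) and $\{u,v\}\setminus\{x_\Delta^\pm\}\subseteq\spadesuit$ (from \cref{lem:uv}), which amounts to the same thing.
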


\begin{proof}
We prove by induction with aid of \cref{lem:crt:itl} for $(\alpha,\beta)=(u,v)$.
Note that $R_1=\{0\}$. By \cref{lem:uv}, we have $u<0<v$. 
From definition, any singleton set strictly interlaces the empty set~$R_0$.
Now, we can suppose, for some $m\ge0$, 
that $|R_{m+1}|=m+1$, $R_{m+1}\subset (u,v)$, 
and $R_{m+1}$ strictly interlaces $R_m$.
Let $n\ge0$.
From \cref{rec2:linear}, 
every polynomial $W_n(z)$ is of degree $n$.
By \cref{lem:uv}, we have $B(x)<0$ for $x\in R_n$,
$W_n(u)W_{n+2}(u)>0$ and $W_{n}(v)W_{n+2}(v)>0$.
By \cref{lem:crt:itl}, we obtain the real-rootedness,
the bound $(u,v)$ and the strict interlacing property.
By \cref{thm:lz}, we have $\{x_\Delta^\pm\}\subseteq\clubsuit$.
By \cref{lem:uv}, we have $\{u,v\}\backslash\{x_\Delta^\pm\}\subseteq\spadesuit$. 
Hence both the numbers $u$ and $v$ are limits of zeros.
This completes the proof.
\end{proof}

We remark that the sharpness of the bound $(u,v)$ can be shown 
by using the totally different method demonstrated in the proof of Theorem 4.5 in \cite{GMTW16-10}.

\subsection{Case $x_A=x_B$}
Suppose that $x_A=x_B$.
Then \cref{interval:uv} reduces to 
\[
u=\begin{cases}
x_\Delta^-,&\text{if $a<2$ and $F\le0$}\\[4pt]
x_g^+,&\text{if $a<1$ and $F>0$}\\[4pt]
x_g^-,&\text{otherwise}
\end{cases}
\qquad\rmand\qquad
v=x_\Delta^+=x_A=x_B.
\]
In an analogue with \cref{lem:uv}, we have \cref{lem:uv:=}.
\begin{lem}\label{lem:uv:=}
Let $a,c>0$ and $b,d<0$.
If $x_A=x_B$, then 
$u\le x_\Delta^-$,
$u<0$,
$W_n(u)(-1)^n>0$,
and $u\in\spadesuit$ as if $u\ne x_\Delta^-$.
\end{lem}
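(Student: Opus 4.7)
The plan is to mirror the case analysis in the proof of \cref{lem:uv}, exploiting the simplifications afforded by $x_A=x_B$. I first record the immediate consequences of this hypothesis: $B(x_A)=c(x_A-x_B)=0$, so $\Delta(x_A)=4B(x_A)=0$ and $\Delta_\Delta=c^2>0$, yielding $x_\Delta^+=x_A$ and $x_\Delta^-=x_A-4c/a^2<x_A$. Moreover $h(x_\Delta^+)=h(x_A)=(2-a)x_A-b=-2b/a>0$, since $b<0$ and $a>0$. I then split into the three cases prescribed by the piecewise definition of $u$.

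In the case $a<2$ and $F\le 0$ (so $u=x_\Delta^-$), the inequality $u\le x_\Delta^-$ is an equality and the $\spadesuit$ claim is vacuous. Combining $h(x_\Delta^+)>0$ with the identity $h(x_\Delta^-)h(x_\Delta^+)=4F/a^2\le0$ yields $h(x_\Delta^-)\le 0$; since $h(z)=(2-a)z-b$ is strictly increasing for $a<2$, this gives $x_\Delta^-\le x_h=b/(2-a)<0$. To compute the sign of $W_n(u)$, I apply the $\Delta=0$ branch of \cref{lem:00} at $u=x_\Delta^-$: the factor $(A(u)+nh(u))/2$ is negative because $A(u)=a(u-x_A)<0$ and $h(u)\le0$, and $\bg{A(u)/2}^{n-1}$ has sign $(-1)^{n-1}$, whence $(-1)^nW_n(u)>0$.

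In the two remaining cases, $u$ is a zero of $g$. Here the arguments in the proof of \cref{lem:uv} for the cases $a<1$, $F>0$ and ``otherwise'' rely only on the identities $h(x_\Delta^-)h(x_\Delta^+)=4F/a^2$, $h(x_g^-)h(x_g^+)=F/(a-1)$, $x_g^-x_g^+=d/(a-1)$, together with the positivity of $x_\Delta^+$ and of $h(x_\Delta^+)$; all of these remain valid in the present setting, so the same reasoning gives $u\le x_\Delta^-$, $u<0$, and $h(u)>0$. The closed form $W_n(x_g^\pm)=(x_g^\pm)^n$ then delivers $(-1)^nW_n(u)>0$ at once, since $u<0$. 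For the $\spadesuit$ claim, $g(u)=0$ by construction; as $u\in\R$, the condition $\Re\bg{A(u)\overline{h(u)}}<0$ reduces to $A(u)h(u)<0$, which holds because $A(u)=a(u-x_A)<0$ (using $u\le x_\Delta^-<x_A$) and $h(u)>0$.

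The main obstacle is the one place where the $x_A<x_B$ derivation uses an inequality genuinely sensitive to the gap $x_B-x_A$, namely the sub-case $a=1$ of the ``otherwise'' branch, which in \cref{lem:uv} relied on $bc>d$ to conclude $F>0\Rightarrow b+c<0$. Here $bc=d$, so I must re-verify this implication directly: with $a=1$ one has $F=b^2+2d$, so $F>0$ gives $b^2>-2d$; combined with $c=d/b$, this yields $b+c=b+d/b<0$, as needed. All other sub-cases of the ``otherwise'' branch go through without change because the sign of $1-a$ (governing the orientation of the parabola $g$) and of $F$ enter only through the identities already listed, so no new argument is required.
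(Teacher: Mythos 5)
Your proof is correct and follows essentially the same route as the paper, whose own proof simply asserts that the argument of Lemma~3.2 carries over to the case $x_A=x_B$; you have made that carrying-over explicit, including the key simplifications $x_\Delta^+=x_A$, $\Delta_\Delta=c^2>0$, and $h(x_\Delta^+)=-2b/a>0$. The one repair you single out (that $F>0$ still forces $b+c<0$ when $a=1$) is in fact already a consequence of $b,d<0$ alone --- if $b+c\ge0$ then $bc\le -b^2$ and $F=d+bc+b^2\le d<0$ --- so no genuine divergence from the paper's argument arises.
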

\begin{proof}
Same to the proof of \cref{lem:uv}.
\end{proof}
Now we can demonstrate the root distribution of the polynomials $\{W_n(z)\}$.
\begin{thm}\label{thm:rr:=}
Let $a,c>0$ and $b,d<0$ such that $x_A=x_B$.
Let $\{W_n(z)\}_n$ be a sequence of polynomials satisfying \cref{rec2:linear}
with $W_0(z)=1$ and $W_1(z)=z$.
Then the function $U_n(z)=W_n(z)/A^{\lfloor n/2\rfloor}(z)$ is a polynomial,
with all its zeros lying in the interval $(u,\,x_B)$. Moreover, the interval $(u,\,x_B)$ is sharp 
in the sense that both the numbers $u$ and $x_B$ are limits of zeros of the polynomials $U_n(z)$.
\end{thm}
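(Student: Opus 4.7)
The plan is to reduce the problem to a cleaner recurrence, prove real-rootedness and the bound $(u,x_B)$ by a perturbation from the case $x_A<x_B$ handled in \cref{thm:rr:<}, and then invoke \cref{lem:BKW} for sharpness at $x_A$. The hypothesis $x_A=x_B$ is equivalent to $b/a=d/c$, which forces $B(z)=(c/a)A(z)$. Substituting into \cref{rec2:linear} factors out one copy of $A(z)$ at each step, and a short induction shows that $A(z)^{\lfloor n/2\rfloor}$ divides $W_n(z)$. The first task is to verify that the quotient $U_n(z)=W_n(z)/A(z)^{\lfloor n/2\rfloor}$ satisfies the interlocking recurrences
\[
U_{2k}=U_{2k-1}+(c/a)U_{2k-2}, \qquad U_{2k+1}=A(z)U_{2k}+(c/a)U_{2k-1},
\]
with $U_0=1$ and $U_1=z$, and that the factorisation is \emph{exact}, which follows from the inductive claim $U_n(x_A)>0$: the even recurrence adds two positive quantities, and the odd recurrence reduces at $z=x_A$ to $(c/a)U_{2k-1}(x_A)$ since $A(x_A)=0$.

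For real-rootedness and the bound $(u,x_B)$ I would fix $\epsilon>0$ and replace $d$ by $d-\epsilon c$, so that $x_B^{(\epsilon)}=x_B+\epsilon$ strictly exceeds $x_A$; \cref{thm:rr:<} then gives real-rootedness of the perturbed polynomials $W_n^{(\epsilon)}$ with zeros in $(u^{(\epsilon)},v^{(\epsilon)})$. Since $u^{(\epsilon)}\to u$ and $v^{(\epsilon)}\to v=x_A$ as $\epsilon\to 0^+$, the continuous dependence of zeros on coefficients forces the zeros of $W_n$ to be real and to lie in $[u,x_A]$. Combined with $U_n(x_A)\ne0$ from the previous paragraph and $W_n(u)\ne0$ from \cref{lem:uv:=}, this confines the zeros of $U_n$ to the open interval $(u,x_A)$.

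For sharpness at the left endpoint $u$, \cref{lem:uv:=} together with \cref{thm:lz} guarantees that $u$ is a limit of zeros of $\{W_n\}$ (either $u\in\spadesuit$, or $u=x_\Delta^-\in\clubsuit=J_\Delta$), and a convergent sequence $z_n\to u$ of zeros of $W_n$ eventually differs from $x_A$, hence eventually is a zero of $U_n$. The delicate endpoint is $x_A$ itself: every zero of $W_n$ at $x_A$ is absorbed by the factor $A^{\lfloor n/2\rfloor}$, so one must genuinely produce zeros of $U_n$ accumulating at $x_A$. To do this I would pass to the even subsequence $P_k:=U_{2k}$ and eliminate $U_{2k-1}$ from the two recurrences above to obtain the closed second-order recurrence
\[
P_{k+1}(z)=\bg{A(z)+2c/a}P_k(z)-(c/a)^2 P_{k-1}(z),
\]
with $P_0=1$ and $P_1=z+c/a$. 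At $z=x_A$ the reduced discriminant vanishes, $\tilde\Delta(x_A)=(2c/a)^2-4(c/a)^2=0$, so the characteristic roots $\tilde\lambda_\pm(x_A)=c/a$ coincide; after checking the two non-degeneracy hypotheses of \cref{lem:BKW} for $\{P_k\}$ (failure of an order-one recurrence, and $|\tilde\lambda_+|\ne|\tilde\lambda_-|$ for large real $z$), the BKW characterization identifies $x_A$ as a non-isolated limit of zeros of $\{P_k\}\subseteq\{U_n\}$. Since $P_k(x_A)>0$, the zeros of $P_k$ accumulating at $x_A$ are distinct from $x_A$, so $x_A$ is indeed a limit of zeros of $\{U_n\}$. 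This last step — separating a genuine zero of $U_n$ near $x_A$ from the large-multiplicity zero swallowed into $A^{\lfloor n/2\rfloor}$ — is the main obstacle of the proof.
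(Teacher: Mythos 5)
Your proposal is correct in outline but follows a genuinely different route from the paper in both main steps. You and the paper agree on the first step (the interlocking even/odd recurrences for $U_n$ and the positivity $U_n(x_A)>0$, which shows the power of $A$ dividing $W_n$ is exactly $\lfloor n/2\rfloor$). For the bound, the paper runs a direct induction: it proves a one-sided strict interlacing of the zero sets of consecutive $U_n$ inside $(u,x_B)$, using the sign data $U_n(x_B)>0$, $U_n(u)(-1)^{\lceil n/2\rceil}>0$ and the intermediate value theorem; this yields more than the theorem states (an interlacing structure), at the cost of a bespoke induction. Your perturbation $d\mapsto d-\epsilon c$ plus continuity of zeros is cleaner and reuses \cref{thm:rr:<} wholesale, but it leans on the convergence $u^{(\epsilon)}\to u$ and $v^{(\epsilon)}\to x_A$, which is not automatic because the definition \eqref{interval:uv} of $(u,v)$ is piecewise in the sign of $F$, and $F$ moves under the perturbation; one must check that no discontinuous case switch occurs (it does not: when $x_A=x_B$ one computes $F=-2d(a-2)+b^2>0$ for $a\ge2$, so the case ``$a>2$, $F<0$'' with $v=x_g^+$ never arises, and at the remaining boundaries the formulas match up). For sharpness at $x_A$, the paper simply observes that every point of $J_\Delta$ is a limit of zeros of $W_n$ by \cref{thm:lz}, transfers this to $U_n$ away from $x_A$, and uses closedness of the set of limits of zeros; your alternative — eliminating the odd terms to get $P_{k+1}=(A+2c/a)P_k-(c/a)^2P_{k-1}$, checking $\tilde\Delta(x_A)=0$, and applying \cref{lem:BKW} directly to $\{P_k\}$ together with $P_k(x_A)>0$ — is a self-contained and valid way to manufacture zeros of $U_{2k}$ accumulating at $x_A$, though as written it only treats the even-indexed subsequence; to match the paper's definition of ``limit of zeros'' (a zero of $U_n$ for \emph{each} $n$ converging to $x_A$) you should either run the same argument on $Q_k=U_{2k+1}$ or fall back on the paper's transfer-plus-closedness argument. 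Neither loose end is fatal, but both deserve a sentence in a final write-up.
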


\begin{proof}
By \cref{rec2:linear}, the functions $U_n(z)$ satisfy the recurrence
\begin{equation}\label[rec]{rec:U}
U_n(z)=\begin{cases}
\displaystyle \qquad U_{n-1}(x)+c'\cdot U_{n-2}(x),&\quad\text{if $n$ is even},\\[4pt]
\displaystyle A(x)U_{n-1}(x)+c'\cdot U_{n-2}(x),&\quad\text{if $n$ is odd},
\end{cases}
\end{equation}
where $c'=c/a$,
with $U_0(z)=1$ and $U_1(z)=z$. 
It follows immediately that the function $U_n(z)$ is a polynomial of degree $\lceil{n/2}\rceil$.
Let $R_n'$ be the zero set of $U_n(z)$.

We shall show by induction that 
the zeros $z_j$ of $U_n(z)$ strictly interlaces the zeros~$x_j$ of $U_{n-1}(z)$ from the left,
in the interval $(u,\,x_B)$, i.e.,
\begin{equation}\label[rl]{interlacing:U}
\begin{cases}
u<z_1<x_1<z_2<\cdots<z_{\lceil{\frac n2}\rceil}<x_{\lceil{\frac{n-1}{2}}\rceil}<x_B,
&\text{if $n$ is even};\\
u<z_1<x_1<z_2<\cdots<z_{\lceil{\frac{n-1}{2}}\rceil}<x_{\lceil{\frac{n-1}{2}}\rceil}
<z_{\lceil{\frac{n}{2}}\rceil}<x_B,
&\text{if $n$ is odd}.
\end{cases}
\end{equation}
We make some preparations. 
First, by \cref{rec:U}, it is direct to show by induction that $U_n(x_B)>0$.
Second,
by \cref{lem:uv:=}, we have $u\le x_\Delta^-<x_\Delta^+=x_A$ and $W_n(u)(-1)^n>0$.
Therefore, we have $A(u)<0$ and thus
\[
U_n(u)(-1)^{\lceil{n/2}\rceil}>0.
\]
In particular, we have $U_2(u)<0$. Since $U_2(u)=z+c'$, we have $u<-c'<0<x_B$.
This checks the truth for $n=2$.
Let $n\ge3$. By induction hypothesis, the set $R_{n-1}'$ strictly interlaces $R_{n-2}'$ from the left.
Therefore, we have
\[
U_{n-2}(x_j)(-1)^{\lceil{n/2}+j}\rceil>0\qquad \text{for $j\le \lceil{(n-1)/2}\rceil$.}
\]
By \cref{rec:U}, the number $U_n(x_j)$ has the same sign as the number $U_{n-2}(x_j)$, that is,
$U_{n}(x_j)(-1)^{\lceil{n/2}+j}\rceil>0$.
By using the intermediate value theorem, we derive the desired \eqref{interlacing:U}.

Same to the proof of \cref{thm:rr:<}, 
one may show the minimality of the interval $(u,x_B)$ as a bound of the zeros of
polynomials $W_n(z)$. Note that $x_\Delta^-\ne x_\Delta^+$.
By \cref{thm:lz}, each point in the interval $J_\Delta$ is a limit of zeros of the polynomials~$W_n(z)$.
Therefore, each point in $J_\Delta$ is a limit of zeros of the polynomials $U_n(z)$,
and the interval $(u,x_B)=(u,x_\Delta^+)$ becomes the best bound of the union of zeros of 
all polynomials $U_n(z)$. 
This completes the proof. 
\end{proof}

\end{document}